\newtheorem{theorem}{Theorem}
\newtheorem*{conjecture*}{Conjecture}
\newtheorem{lemma}{Lemma}
\theoremstyle{remark}
\newtheorem*{remark*}{Remark}
\newcommand{\vs}{\vspace{3mm}}
\newcommand{\vsss}{\vspace{6mm}}
\newcommand{\hs}{\hspace{1mm}}
\newcommand{\Z}{\mathbb{Z}}
\newcommand{\N}{\mathbb{N}}
\newcommand{\C}{\mathbb{C}}
\newcommand{\ep}{\epsilon}
\newcommand{\lam}{\lambda}
\newcommand{\sub}{\subseteq}
\newcommand{\al}{\alpha}
\DeclareMathOperator{\Probb}{Prob}
\DeclareMathOperator{\supp}{supp}
\title{The Maximum Number of Three Term Arithmetic Progressions, and Triangles in Cayley Graphs}
\author{Zachary Chase}
\address{Department of Mathematics, California Institute of Technology, Pasadena, CA, 91125}
\email{zchase@caltech.edu}
\date{September 11th, 2018}
\begin{document}

\begin{abstract}
Let $G$ be a finite Abelian group. For a subset $S \subseteq G$, let $T_3(S)$ denote the number of length three arithemtic progressions in $S$ and $\Probb[S] = \frac{1}{|S|^2}\sum_{x,y \in S} 1_S(x+y)$. For any $q \ge 1$ and $\al \in [0,1]$, and any $S \subseteq G$ with $|S| = \frac{|G|}{q+\al}$, we show $\frac{T_3(S)}{|S|^2}$ and $\Probb[S]$ are bounded above by $\max\left(\frac{q^2-\al q+\al^2}{q^2},\frac{q^2+2\al q+4\al^2-6\al+3}{(q+1)^2},\gamma_0\right)$, where $\gamma_0 < 1$ is an absolute constant. As a consequence, we verify a graph theoretic conjecture of Gan, Loh, and Sudakov for Cayley graphs.
\end{abstract}

\maketitle

\section{Introduction}

The study of arithmetic progressions in subsets of integers and general Abelian groups is a central topic in additive combinatorics and has led to the development of many fascinating areas of mathematics. A famous result on three term arithmetic progressions (3APs) is Roth's theorem, which, in its finitary form, says that for each $\lam > 0$, for $N$ large, any subset $S \sub \{1,\dots, N\}$ of size $|S| \ge \lam N$ contains a 3AP.

\vs

Once Roth's theorem ensures that all subsets of a given size have a 3AP, one can generate many 3APs. For example, Varnavides [4] proved that for each $\lam > 0$, there is some $c > 0$ so that for all large $N$, every subset $S \sub \{1,\dots,N\}$ with $|S| \ge \lam N$ contains at least $cN^2$ 3APs. A natural question is then how many 3APs a subset of $\{1,\dots,N\}$ of a prescribed size can have. We look at this question in the group theoretic setting.

\vs

Fix $\lam \in (0,1)$. Let $p$ be a large prime and consider subsets $S \sub \Z_p$ of size $|S| = \lfloor \lam p \rfloor$. If $T_3(S)$ denotes the number of 3APs in $S$, namely, the number of $x,d \in \Z_p$ with $x,x+d,x+2d \in S$, then Croot [1] showed that $$\lim_{p \to \infty} \max_{\substack{S \sub \Z_p \\ |S| = \lfloor \lam p \rfloor}} \frac{T_3(S)}{|S|^2}$$ exists, and then Green and Sisask [2] proved that the limit is in fact $\frac{1}{2}$, for all $\lam$ less than some absolute constant. In $\Z_n$, for $n$ not prime, the situation is quite different, since subgroups have many 3APs relative to their size. In this paper, we nevertheless get an upper bound, useful when the size of $S$ is ``far" from dividing $n$.

\vs

\begin{theorem} There is an absolute constant $\gamma_1 < 1$ so that for any finite Abelian group $G$ of odd order, and for any $q \in \N, \al \in [0,1]$, $$\max_{\substack{S \sub G \\ |S| = \frac{|G|}{q+\al}}} \frac{T_3(S)}{|S|^2} \le \max\left(\frac{q^2-\al q+\al^2}{q^2},\frac{q^2+2\al q+4\al^2-6\al+3}{(q+1)^2},\gamma_1\right).$$
\end{theorem}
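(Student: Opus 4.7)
The plan is a Fourier/structure dichotomy enabled by $|G|$ being odd, so that $y\mapsto 2y$ is a bijection on $G$. Substituting $b=(a+c)/2$ rewrites
$$T_3(S) \;=\; \sum_{(a,c)\in S\times S} 1_S\!\left(\tfrac{a+c}{2}\right),$$
and Fourier inversion of $1_S$ gives
$$\frac{T_3(S)}{|S|^2} \;=\; \delta \;+\; \frac{1}{|G|\,|S|^2}\sum_{\chi\neq 0}\widehat{1_S}(\chi)^2\,\overline{\widehat{1_S}(2\chi)},$$
where $\delta:=|S|/|G|=1/(q+\al)$ and $2\chi$ denotes the character $s\mapsto\chi(2s)$, still nontrivial by oddness. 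Let $M:=\max_{\chi\neq 0}|\widehat{1_S}(\chi)|$, and fix a small $\eta>0$ that will determine $\gamma_1$.

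If $M\le\eta|S|$, pulling $M$ out of the error term and applying Parseval $\sum_{\chi\neq 0}|\widehat{1_S}(\chi)|^2\le|G||S|$ gives $T_3(S)/|S|^2 \le \delta + M/|S| \le 1/(q+\al)+\eta$. For $q\ge 2$ this is $\le 1/2+\eta$, which can be forced below any target $\gamma_1\in(1/2,1)$ by shrinking $\eta$. The boundary case $q=1$ (so $\delta\ge 1/2$) requires a separate treatment: there $|S|$ is close to $|G|$, and a direct inclusion--exclusion on the complement $B:=G\setminus S$, bounding $T_3(S) = |G|^2 - |\{(x,d): \text{some of } x, x+d, x+2d \in B\}|$, matches the claimed $f_1(1,\al)=1-\al+\al^2$. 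If instead $M\ge\eta|S|$, some nontrivial $\chi_0$ with kernel $H_0$ witnesses a density imbalance of $S$ across the $H_0$-cosets, and a Meshulam-style density-increment iteration produces a subgroup $H\le G$ together with a union $T$ of $H$-cosets satisfying $|S\triangle T|=o(|S|)$; the density constraint $|S|=|G|/(q+\al)$ then forces $[G:H]\in\{q,q+1\}$, the integers bracketing $q+\al$.

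Passing to $\bar G=G/H$ reduces the problem to bounding $T_3(\bar S)/|\bar S|^2$ for $\bar S\subset\bar G$ of density $1/(q+\al)$ that is close to a subgroup of index $q$ or $q+1$. Two extremal families emerge: (i) $\bar S = \bar H_1 \setminus \bar B$ for $\bar H_1\le\bar G$ of index $q$ and $|\bar B| = |\bar G|\al/(q(q+\al))$; a direct computation $T_3(\bar H_1\setminus\bar B) = |\bar H_1|^2 - 3|\bar H_1||\bar B| + 3|\bar B|^2$, valid to leading order for generic $\bar B$, gives $T_3(\bar S)/|\bar S|^2 \le (q^2-\al q+\al^2)/q^2$; (ii) $\bar S = \bar H_2 \cup \bar A$ for $\bar H_2$ of index $q+1$ and $|\bar A| = |\bar G|(1-\al)/((q+1)(q+\al))$, yielding the second bound $(q^2+2\al q+4\al^2-6\al+3)/(q+1)^2$ by an analogous computation. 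The hard part will be quantifying stability in the structure step: the density-increment must produce an approximant $T$ close enough to $S$ that $T_3(S)/|S|^2$ differs from the ratio computed on the exact extremal configuration by $o(1)$, and one must verify that no third extremal family is possible (e.g., mixing cosets of two different subgroups). The constant $\gamma_1$ is then chosen just below the threshold at which this stability first becomes effective, so that any $S$ failing the stability hypothesis satisfies $T_3(S)/|S|^2\le\gamma_1$ automatically.
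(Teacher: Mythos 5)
The starting move — rewrite $T_3(S)/|S|^2$ in Fourier, isolate the main term $\delta=|S|/|G|$, and split on the size of the largest nontrivial Fourier coefficient — matches the paper. But the core of your argument, the structural step in the large-coefficient case, has a genuine gap. You assert that a single large Fourier coefficient $M\ge\eta|S|$ can be fed into ``a Meshulam-style density-increment iteration'' that produces a subgroup $H$ and a union of cosets $T$ with $|S\triangle T|=o(|S|)$. This is not what density increment gives. Density increment trades one large coefficient for a denser restriction to a coset, and it is designed to force a contradiction for 3AP-\emph{free} sets of positive density; it does not converge to an approximating union of cosets, and certainly not with $o(|S|)$ symmetric-difference error unless $M$ is already nearly $|S|$. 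For the dichotomy to close with a threshold $\eta$ small enough that $\delta+\eta<\gamma_1$ (which for $q=2$ forces $\eta<1/4$ or so), the large case includes sets whose largest Fourier coefficient is as small as a quarter of $|S|$, and those are nowhere near a union of cosets. Your subsequent claim that $[G:H]\in\{q,q+1\}$ is also unsupported and, in fact, not what happens in the paper: the paper's reduction lands on a subgroup whose index can be unrelated to $q$ and $q+1$, and the correct combination of the inductive bound with the error from the complement is the content of its Lemma 2.

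The paper avoids the gap by a different, quantitatively sharp route. From $T_3(S)/|S|^2\ge\gamma$ it extracts a coefficient of size $\ge(d/n)\mu$ with $\mu=(\gamma-\delta)/(1-\delta)$, which (for $\gamma$ close to $1$) is genuinely close to $1$. It then shows most of $S$ lies in an arc $A$ where a scaled character has small argument, transfers to the residue classes mod $n/g$ where there is no wraparound, and proves a combinatorial lemma (Lemma~1 in the paper): if $\sum_{i,j}\min(a_ia_j,a_ia_{(i+j)/2},a_ja_{(i+j)/2})\ge(1-\ep)d^2$ for nonnegative weights summing to $d$, then some single weight is $\ge(1-\ep)d$. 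That lemma — not a density-increment iteration — is what forces $S$ to concentrate on a single coset of a subgroup. Your plan has no substitute for this step. Likewise, your two ``extremal families'' are asserted to be extremal without proof; in the paper the two rational expressions arise as upper bounds through explicit calculation (Lemma~2), not from a classification of near-extremal configurations. As written, your proposal is a plausible outline of a different proof strategy but it does not supply the key lemma that would make the structural step work, and the claimed tie-in to the specific bounds $(q^2-\al q+\al^2)/q^2$ and $(q^2+2\al q+4\al^2-6\al+3)/(q+1)^2$ is not established.
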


\vspace{1mm}

Related to $\frac{T_3(S)}{|S|^2} = \frac{1}{|S|^2} \sum_{x,y \in S} 1_S(\frac{x+y}{2})$ is the quantity $\frac{1}{|S|^2}\sum_{x,y \in S} 1_S(x+y)$. This quantity, which we denote $\Probb[S]$, arises in the expression for the number of triangles in a Cayley graph with generating set $S$. Precisely, let $G$ be an additive group of size $n$ and $S \sub G$ a symmetric set not containing $0$. Connect $x,y \in G$ iff $x-y \in S$. We obtain an undirected graph on $G$ with no self loops. The number of triangles in our graph is $$\frac{1}{6}\sum_{a,b,c \in G} 1_S(a-b)1_S(b-c)1_S(a-c).$$ Let $x = a-b$ and $y = b-c$. Then ranging over $c,b,a$ is equivalent to ranging over $c,y,x$ and thus $$|T| = \frac{1}{6}\sum_{x,y,c} 1_S(x)1_S(y)1_S(x+y) = \frac{1}{6}n\sum_{x,y \in S} 1_S(x+y) = \frac{1}{6}n|S|^2\Probb[S].$$

Quite recently, Gan, Loh, and Sudakov [3] resolved a conjecture of Engbers and Galvin regarding the maximum number of independent sets of size $3$ that a graph with a given minimum degree and fixed size can have. Phrased in complementary graphs, they showed that given a maximum degree $d$ and a positive integer $n \le 2d+2$, the maximum number of triangles that a graph on $n$ vertices with maximum degree $d$ can have is ${d+1 \choose 3}+{n-(d+1) \choose 3}$. This immediately raised the question of what the maximum is for $n > 2d+2$. They conjectured the following.

\vs

\begin{conjecture*}[Gan-Loh-Sudakov] Fix $d \ge 2$. For any positive integer $n$, if we write $n = q(d+1)+r$ for $0 \le r \le d$, then the maximum number of triangles that a graph on $n$ vertices with maximum degree $d$ can have is $q{d+1 \choose 3}+{r \choose 3}$.
\end{conjecture*}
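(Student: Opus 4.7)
The plan is to prove the conjecture by strong induction on $n$, attempting to peel off a $K_{d+1}$ subgraph at each stage. The extremal configuration --- $q$ vertex-disjoint copies of $K_{d+1}$ together with one $K_r$ --- realizes $q\binom{d+1}{3}+\binom{r}{3}$ triangles, which suggests any extremal $G$ should admit a ``detachable'' $K_{d+1}$ to remove and induct on. The base cases $n \le d+1$ are immediate, since any such graph has at most $\binom{n}{3}$ triangles, matching the bound with $q=0$ and $r=n$.

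The easy inductive step runs as follows. Suppose $G$ contains a vertex $v$ with $\deg(v) = d$ whose open neighborhood $N(v)$ induces a clique. Then $N[v] := \{v\} \cup N(v)$ is a $K_{d+1}$, and because every vertex of $N[v]$ already has degree $d$, no edges leave this set. Writing $n - (d+1) = (q-1)(d+1) + r$, the induction hypothesis applied to $G \setminus N[v]$ gives at most $(q-1)\binom{d+1}{3}+\binom{r}{3}$ triangles there, while the removed $K_{d+1}$ contributes exactly $\binom{d+1}{3}$ more, so $G$ has at most $q\binom{d+1}{3}+\binom{r}{3}$ triangles.

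The main obstacle is the complementary case, in which $G$ contains no such ``saturated'' $K_{d+1}$, i.e.\ no vertex $v$ with $N[v] \cong K_{d+1}$. My plan here is a local switching argument: let $v$ maximize the through-$v$ triangle count $t(v) = e(N(v))$; if $N[v]$ is not a $K_{d+1}$ (either because $\deg(v) < d$, or because some pair in $N(v)$ is non-adjacent), I would look for an edge swap that preserves the constraint $\Delta(G) \le d$, does not decrease the total triangle count, and strictly increases $e(N(v))$ or creates a saturated vertex elsewhere. Iterating such swaps should terminate in a graph to which the easy case applies. A parallel strategy is a direct counting inequality: prove that, in the absence of any saturated $K_{d+1}$, the identity $T(G) = \tfrac{1}{3}\sum_v t(v)$ admits a strict improvement over the uniform bound $t(v) \le \binom{d}{2}$, large enough to sharpen $\tfrac{n}{3}\binom{d}{2}$ down to $q\binom{d+1}{3}+\binom{r}{3}$.

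This is the principal difficulty because the gap between the naive bound $\tfrac{nd(d-1)}{6}$ and the target is exactly a $\binom{r}{3}$-type correction, so the improvement must be tight and must exploit the way a near-clique local structure propagates globally. The Cayley-graph verification proved earlier in this paper is a proof-of-concept: there vertex-transitivity forces all $t(v)$ to be equal, so the bound on $\Probb[S]$ directly yields the triangle bound via $|T| = \tfrac{1}{6} n |S|^2 \Probb[S]$. In a general graph no such symmetry is available, and handling the irregular distribution of the $t(v)$ across vertices --- while simultaneously arguing that a failure of local saturation is enough to produce a swap or counting gain --- is what makes the hard step delicate.
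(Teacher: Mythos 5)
There is a fundamental mismatch here: the statement you are trying to prove is stated in the paper as a \emph{conjecture}, not a theorem. The paper does not prove it and does not claim to; it only verifies the conjectured bound for the special class of Cayley graphs (and only when the generating set satisfies $|S| \le n/7$), using Fourier analysis on the group and the identity $|T| = \tfrac{1}{6} n |S|^2 \Probb[S]$. So there is no ``paper's own proof'' to compare against, and your proposal should be judged as an attempt at an open problem.

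Judged on those terms, your proposal is not a proof. The ``easy inductive step'' is fine: if some vertex $v$ has $\deg(v)=d$ and $N(v)$ is a clique, then $N[v]$ is a component isomorphic to $K_{d+1}$ and induction applies cleanly. But the entire content of the conjecture lives in the complementary case, and there you only describe intentions: ``I would look for an edge swap that preserves $\Delta(G)\le d$, does not decrease the triangle count, and strictly increases $e(N(v))$.'' You give no construction of such a swap, no invariant guaranteeing termination, and no argument that the process cannot cycle or get stuck at a non-extremal local optimum. Switching arguments of this type routinely fail for clique-counting problems precisely because a swap that helps $t(v)$ can destroy more triangles elsewhere, and because the target bound is not of the averaged form $\tfrac{n}{3}\binom{d}{2}$ but has the divisibility-sensitive correction $q\binom{d+1}{3}+\binom{r}{3}$, which no purely local inequality of the form $t(v)\le\binom{d}{2}$ can see. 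Your ``parallel strategy'' of sharpening $T(G)=\tfrac13\sum_v t(v)$ restates the difficulty rather than resolving it. The conjecture was eventually settled (by this paper's author, in later work) by a considerably more elaborate argument; nothing in your sketch closes the gap.
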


\vs

For each $d,n$, an example of a graph achieving $q{d+1 \choose 3}+{r \choose 3}$ is simply a disjoint union of $K_{d+1}$'s and a $K_r$. The conjecture for a Cayley graph on an additive group $G$ with generating set $S$, $|S| = \frac{|G|}{q+\al}$, takes the form $\Probb[S] \le \frac{q+\al^3}{q+\al}$, up to smaller order terms. We verify the conjecture for Cayley graphs when $q \ge 7$.

\vs

\begin{theorem} There is an absolute constant $\gamma_0 < 1$ so that the following holds. Let $G$ be a finite Abelian group and take $q \in \N, \al \in [0,1]$. Then for any symmetric subset $S \sub G$ with $|S| = \frac{|G|}{q+\al}$, $$\frac{1}{|S|^2}\sum_{x,y \in S} 1_S(x+y) \le \max\left(\frac{q^2-\al q+\al^2}{q^2},\frac{q^2+2\al q+4\al^2-6\al+3}{(q+1)^2},\gamma_0 \right).$$ Consequently, the Gan-Loh-Sudakov conjecture holds for Cayley graphs with generating set $|S| \le \frac{n}{7}$.
\end{theorem}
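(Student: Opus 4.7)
Since $S$ is symmetric, every Fourier coefficient $\widehat{1_S}(\xi)$ is real, and the convolution identity combined with Plancherel gives
\[
\Probb[S] \;=\; \frac{1}{q+\al} \,+\, \frac{1}{|G||S|^2}\sum_{\xi \neq 0}\widehat{1_S}(\xi)^3.
\]
Setting $M := \max_{\xi \neq 0}|\widehat{1_S}(\xi)|$ and bounding $|\widehat{1_S}(\xi)|^3 \le M\,\widehat{1_S}(\xi)^2$, Parseval yields
\[
\Probb[S] \;\le\; \frac{1}{q+\al} \,+\, \frac{M}{|S|}\Bigl(1 - \frac{1}{q+\al}\Bigr).
\]
The plan is to split into a pseudorandom and a structured regime based on the size of $M/|S|$.

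Fix a small absolute $\eta > 0$. In the \emph{pseudorandom} case $M \le \eta|S|$ (with $q \ge 2$, so $1/(q+\al) \le 1/2$), the estimate above already forces $\Probb[S] < \gamma_0$ for an absolute $\gamma_0 < 1$. The boundary case $q=1$ (where $|S| > |G|/2$) is handled separately using the inequality $|S \cap (S-x)| \ge 2|S|-|G|$ together with the symmetry of $S$.

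In the \emph{structured} case $|\widehat{1_S}(\chi)| > \eta|S|$ for some nontrivial $\chi$, set $H := \ker\chi$, a proper subgroup of $G$, and let $s_c := |S\cap c|/|H|$ for each coset $c \in G/H$. The Fourier lower bound forces the profile $(s_c)$ to be significantly non-constant. A densification/stability step shows that, up to an error swept into $\gamma_0$, each $s_c$ can be rounded to $\{0,1\}$, reducing the problem to maximizing $\Probb[T]$ over symmetric unions of cosets $T \subseteq G/H$ of total size close to $|G|/(q+\al)$. Classifying the optima of this combinatorial problem produces two extremal patterns: (i) $S$ lies inside a subgroup $H_1$ of index $q$, with the complement $H_1 \setminus S$ sum-free in $H_1$; evaluating $\Probb[S]$ by the identity $\Probb[S] = 1 - \frac{\beta(1-2\beta)}{(1-\beta)^2}$ for $\beta = \al/(q+\al)$ gives the first bound $(q^2-\al q + \al^2)/q^2$. (ii) $S$ contains a subgroup $H_2$ of index $q+1$ plus a small extra piece of prescribed structure in a single coset of $H_2$, yielding the second bound $(q^2+2\al q+4\al^2-6\al+3)/(q+1)^2$. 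All other coset patterns are shown by direct comparison to be strictly worse, with the loss absorbable into $\gamma_0$.

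\textbf{Main obstacle.} The hardest step is the densification/stability argument: converting a single biased Fourier coefficient into an honest union-of-cosets structure while tracking the loss precisely enough that non-extremal configurations fall strictly below $\gamma_0$. Combined with the case analysis classifying the two extremal patterns uniformly in $[G{:}H]$, $q$, and $\al$ (particularly near the boundaries $\al \in \{0,1\}$), this will require delicate bookkeeping. The Cayley graph corollary for $q \ge 7$ then follows by the arithmetic verification that each of the three terms in the maximum is bounded above by the Gan-Loh-Sudakov value $(q+\al^3)/(q+\al)$ uniformly for $q \ge 7$ and $\al \in [0,1]$.
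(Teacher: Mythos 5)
Your Fourier setup, the pseudorandom/structured dichotomy, and the two extremal coset patterns are all consistent with the paper's picture, and your arithmetic checks out: with $\beta=\al/(q+\al)$ one indeed has $1-\frac{\beta(1-2\beta)}{(1-\beta)^2}=\frac{q^2-\al q+\al^2}{q^2}$, and $\gamma_0\le\frac{q+\al^3}{q+\al}$ for $q\ge7$. But the step you yourself flag as the ``main obstacle'' — the densification/stability claim that a single biased nontrivial Fourier coefficient lets you round the coset densities $s_c$ to $\{0,1\}$, with loss absorbable into $\gamma_0$ — is a genuine gap, not merely delicate bookkeeping. One large $|\widehat{1_S}(\chi)|$ does not force $S$ to resemble a union of cosets of $\ker\chi$: it only forces the image $\chi(S)$ to concentrate on an arc, leaving $S$ free to be spread arbitrarily over many cosets with fractional densities. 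The loss in any such rounding cannot be pushed into $\gamma_0$ uniformly, because as $\al\to 0$ or $\al\to 1$ the target bound approaches $1$ and tolerates no error. So the bridge between your Fourier input and your combinatorial classification is missing.

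The paper's route is different and sidesteps the need for a full $\{0,1\}$ structure. After extracting a biased real Fourier coefficient $\widehat{1_S}(m_0)\ge\frac{d}{n}\mu$, it shows that a $\frac{2\mu+1}{3}$ fraction of $S$ has $\chi_{m_0}$-image in the arc $[-2\pi/3,2\pi/3]$ (the set $A$), and — crucially — that $\Probb[A]$ is itself large. Since $A$ lies in an arc, its coset-density profile $(a_i)$ indexed over $\Z_{n/g}$ is supported in an interval with no wraparound, so it can be treated as a sequence of nonnegative integers on $\Z$. Lemma~1 of the paper is then a purely combinatorial statement about such sequences: if $\sum_{i,j}\min(a_ia_j,a_ia_{i+j},a_ja_{i+j})\ge(1-\ep)d^2$ with $\ep<\tfrac{1}{10}$, then $a_0\ge(1-\ep)d$. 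This asserts only that the weight \emph{at residue $0$} (i.e., inside the subgroup $H=\ker\chi_{m_0}$) dominates — far weaker than rounding every $s_c$. The paper then writes $S=D\cup E$ with $D=S\cap H$, bounds the $E$-contribution to $\Probb[S]$ crudely, and applies the \emph{inductive hypothesis on $q$} to $\Probb[D]$; Lemma~2 is a routine arithmetic verification that the resulting recursion stays below $F(q,\al)$. Two ideas essential to making the argument close — the arc-concentration plus Lemma~1 mechanism that extracts only the $0$-coset weight, and the induction on $q$ — are absent from your plan. You have the correct extremal endpoints but no valid road between them; you would need either to adopt something like the paper's weight-at-zero lemma and induction, or to prove a genuine stability theorem for $\Probb$ near its maximum, which is likely both harder and quantitatively worse.
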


\vs

We give a fourier analytic proof of Theorems 1 and 2. Here is a quick high-level overview of the argument. We express the relevant ``probability" (either $\frac{1}{|S|^2}\sum_{x,y \in S} 1_S(\frac{x+y}{2})$ or $\frac{1}{|S|^2}\sum_{x,y \in S} 1_S(x+y)$) in terms of the fourier coefficients of $1_S$. If the probability is large, then some nonzero fourier coefficient must be large. We deduce that (a dilate of) the residues of $S$ of a certain modulus concentrate near $0$. Since there won't be ``wraparound" near 0, this allows us to transfer the problem to $\Z$, which is a setting where it's easier to bound the relevant probabilities. We can show from the result in $\Z$ that we in fact must have many residues be $0$. This allows us to conclude that $S$ is very close to a subgroup. Induction and a purely combinatorial argument finish the job from there.

\vs

Here is an outline of the paper. We first set our notation for Fourier analysis on $\Z_n$. Then we give the proof of Theorems 1 and 2, modulo two Lemmas, which we prove afterwards. After, we show the calculations deducing the Gan-Loh-Sudakov conjecture from our main theorem. Finally, we prove Theorems 1 and 2 when $q=1$.

\section{Fourier Analysis on $\Z_n$}

In this section, we briefly fix our notation for fourier analysis on $\Z_n$ and obtain the fourier representation of the relevant quantities in the proofs to be given below. For a function $f: \Z_n \to \C$, define its (finite) fourier transform $\widehat{f} : \Z_n \to \C$ by $$\widehat{f}(m) := \frac{1}{n}\sum_{x \in \Z_n} f(x)e^{-2\pi i \frac{xm}{n}}.$$ The following well-known equalities are straightforward. $$\sum_{m \in \Z_n} |\widehat{f}(m)|^2 = \frac{1}{n} \sum_{x \in \Z_n} |f(x)|^2$$ $$ f(x) = \sum_{m \in \Z_n} \widehat{f}(m)e^{2\pi i \frac{xm}{n}}.$$ Let $S$ be a symmetric subset of $\Z_n$. Then, $\frac{1}{|S|^2}\sum_{x,y \in S} 1_S(x+y) = $ $$\frac{1}{|S|^2} \sum_{x,y \in \Z_n} \left[\sum_{m_1 \in \Z_n} \widehat{1_S}(m_1)e^{2\pi i \frac{xm_1}{n}}\right]\left[\sum_{m_2 \in \Z_n} \widehat{1_S}(m_2)e^{2\pi i \frac{ym_2}{n}}\right] \left[\sum_{m_3 \in \Z_n} \widehat{1_S}(m_3)e^{2\pi i \frac{(x+y)m_3}{n}}\right]$$ $$= \frac{1}{|S|^2}\sum_{m_1,m_2,m_3 \in \Z_n} \widehat{1_S}(m_1)\widehat{1_S}(m_2)\widehat{1_S}(m_3) \left[\sum_{x \in \Z_n} e^{2\pi i \frac{x(m_1+m_3)}{n}}\right] \left[\sum_{y \in \Z_n} e^{2\pi i \frac{y(m_2+m_3)}{n}}\right],$$ and using $$\sum_{x \in \Z_n} e^{2\pi i \frac{xk}{n}} = \begin{cases} n & k \equiv 0 \pmod{n} \\ 0 & k \not \equiv 0 \pmod{n} \end{cases},$$ we obtain $$\frac{1}{|S|^2}\sum_{x,y \in S} 1_S(x+y) =\frac{n^2}{|S|^2} \sum_{m \in \Z_n} \widehat{1_S}(-m)\widehat{1_S}(-m)\widehat{1_S}(m).$$ However, the symmetry of $S$ implies that $\widehat{1_S}(m) = \widehat{1_S}(-m)$ for each $m \in \Z_n$. Therefore, $$\Probb[S] = \frac{1}{|S|^2}\sum_{x,y \in S} 1_S(x+y) = \frac{n^2}{|S|^2} \sum_{m \in \Z_n} \widehat{1_S}(m)^3.$$ Similarly, for any subset $S \sub \Z_n$, $$\frac{1}{|S|^2}\sum_{x,y \in S} 1_S(\frac{x+y}{2}) = \frac{n^2}{|S|^2}\sum_{m \in \Z_n} \widehat{1_S}(m)^2\widehat{1_S}(-2m).$$

\vs

\section{Proof of Theorems 1 and 2}

We induct on $q$. We discuss the base case $q=1$ in section 6. Take some $q \ge 2$ and $\al \in [0,1]$. Let $S \sub \Z_n$ be a symmetric\footnote{In the 3AP setting, we do not assume $S$ is symmetric.} subset with $|S| = \frac{n}{q+\al}$.

\vs

Let $\gamma = \max(\frac{q^2-\al q+\al^2}{q^2},\frac{q^2+2\al q+4\al^2-6\al+3}{(q+1)^2},\gamma_0)$. Assume, for the sake of contradiction, that $\Probb[S] \ge \gamma$. Then, as explained in section 2, $$\sum_m \widehat{1_S}(m)^3 \ge \frac{d^2}{n^2}\gamma.$$ Note $\widehat{1_S}(0)^3 = \frac{d^3}{n^3}$, so, since $\widehat{1_S}(m)$ is real for each $m$\footnote{In the 3AP setting, we instead do $\gamma \frac{d^2}{n^2}-\frac{d^3}{n^3} \le \sup_{m \not = 0} |\widehat{1_S}(-2m)| \cdot [\frac{d}{n}-\frac{d^2}{n^2}]$. Then we take $m_0$ with $|\widehat{1_S}(m_0)| \ge \frac{d}{n}\mu$. Finally, we can translate $S$ so that $\widehat{1_S}(m_0)$ is real and positive.}, $$\gamma\frac{d^2}{n^2}-\frac{d^3}{n^3} \le \sum_{m \not = 0} \widehat{1_S}(m)^3 \le \left(\sup_{m \not = 0} \widehat{1_S}(m)\right) \cdot \sum_{m \not = 0} \widehat{1_S}(m)^2 = \left(\sup_{m \not = 0} \widehat{1_S}(m)\right) \cdot [\frac{d}{n}-\frac{d^2}{n^2}],$$ where we used Plancherel in the last step. Take $m_0 \not = 0$ with $$\widehat{1_S}(m_0) \ge \frac{d}{n}\frac{\gamma-\frac{d}{n}}{1-\frac{d}{n}} =: \frac{d}{n}\mu.$$

Then, $$\mu \le \frac{1}{d}\sum_{x \in S} e^{2\pi i \frac{m_0}{n}x} = \frac{1}{d}\sum_{x \in S} e^{2\pi i \frac{m_0/g}{n/g}x},$$ where $g := \gcd(m_0,n)$. Let $$A = \{x \in \Z_n : 2\pi \frac{m_0/g}{n/g}x \in [-2\pi/3,2\pi/3] \pmod{2\pi}\}$$ $$B = \Z_{n/g}\setminus A.\footnote{In the 3AP setting, we let $A = \{x \in \Z_n : 2\pi \frac{m_0/g}{n/g}x \in [-\frac{\pi}{2},\frac{\pi}{2}]\}$ and $B = \Z_{n/g} \setminus A$.}$$

Then, since $\widehat{1_S}(m_0)$ is real, $$d\mu \le \sum_{x \in S} \cos(2\pi \frac{m_0/g}{n_0/g}x) \le |A|+(d-|A|)(-\frac{1}{2}),$$ which implies $$ \frac{|A|}{d} \ge \frac{2\mu+1}{3}. \footnote{In the 3AP setting, we get $d\mu \le |A|+(d-|A|)0$ and thus $\frac{|A|}{d} \ge \mu$.} $$ For $z \in B$, $$\#\{(x,y) \in S^2 : x+y = z\} \le d$$ and for $z \in A$, $$\#\{(x,y) \in B\times A : x+y = z\} \le |B|$$ $$\#\{(x,y) \in S\times B : x+y=z\} \le |B|$$ $$\#\{(x,y) \in A \times A : x+y = z\} =: C_z. \footnote{In the 3AP setting, the sets will merely have $2z$ instead of $z$ - the same estimates thus hold.}$$ Therefore,, $$d^2\Probb[S] \le d|B|+2|A|\hs|B|+\sum_{z \in A} C_z$$ $$= d(d-|A|)+2|A|(d-|A|)+|A|^2\Probb[A].$$

\noindent So, we must have $$\Probb[A] \ge \frac{\gamma+2\frac{|A|^2}{d^2}-\frac{|A|}{d}-1}{\frac{|A|^2}{d^2}}.$$ If we let $f(x) = \frac{\gamma+2x^2-x-1}{x^2}$, then $f'(x) = -2\gamma x^{-3}+x^{-2}+2x^{-3}$ is positive for $x > 0$. We've shown $\frac{|A|}{d} \ge \frac{2\mu+1}{3} =: v \footnote{In the 3AP setting, we have $\nu := \mu$.}$, so we get that $$\Probb[A] \ge \frac{\gamma+2v^2-v-1}{v^2} =: \beta.$$

We now argue that the weight at $0$ must be large. For each $i \in [-\frac{1}{3}\frac{n}{g},\frac{1}{3}\frac{n}{g}]$, let $S_i = \{x \in S : x \equiv i \pmod{n/g}\}$. Let $a_i = |S_i|$. Note that for each $i,j \in [-\frac{1}{3}\frac{n}{g},\frac{1}{3}\frac{n}{g}]$ such that $i+j \in [-\frac{1}{3}\frac{n}{g},\frac{1}{3}\frac{n}{g}]$, $$\#\{(x_i,y_j,z_{i+j}) \in S_i\times S_j \times S_{i+j} : x_i+y_j = z_{i+j}\} \le \min(|S_i|\hs |S_j|, |S_i| \hs |S_{i+j}|, |S_j| \hs |S_{i+j}|). \footnote{In the 3AP setting, we'll be looking at $[-\frac{1}{4}\frac{n}{g},\frac{1}{4}\frac{n}{g}]$ instead. Also, we'll have $2z_{\frac{i+j}{2}} \in S_{\frac{i+j}{2}}$ instead of $z_{i+j} \in S_{i+j}$, and $|S_{\frac{i+j}{2}}|$ instead of $|S_{i+j}|$. This alters Lemma 1 not too significantly.} $$ The uniqueness of $0$ is that $0+0 = 0$, so that $\#\{(x_0,y_0,z_0) \in S_0^3 : x_0+y_0 = z_0\}$ cannot be upper bounded by potentially smaller terms $|S_i|, i \not = 0$. Note that the sets whose size we just bounded account for all the terms in the computation of $\Probb[A]$, since, by our choice of $A$, there is no ``wraparound".\footnote{In the 3AP setting, the lack of wraparound for $x,y \in [-\frac{1}{4}\frac{n}{g},\frac{1}{4}\frac{n}{g}] \pmod{n/g}$ follows from the fact that either $x+y$ is even and then of course $\frac{x+y}{2} \in [-\frac{1}{4}\frac{n}{g},\frac{1}{4}\frac{n}{g}]$, or it's odd and then $\frac{x+y}{2} = (x+y)\frac{n+1}{2} = \frac{x+y-1}{2}+\frac{g-1}{2}\frac{n}{g}+\frac{\frac{n}{g}+1}{2} = \frac{x+y-1}{2}+\frac{\frac{n}{g}+1}{2} \pmod{n/g}$; since $\frac{x+y-1}{2} \in [-\frac{1}{4}\frac{n}{g},\frac{1}{4}\frac{n}{g}]$ we therefore see that $\frac{x+y}{2} \not\in [-\frac{1}{4}\frac{n}{g},\frac{1}{4}\frac{n}{g}] \pmod{n/g}$.}

Take $\gamma_0$ so that $\beta > \frac{9}{10}$ (for any $q,\al$). $\gamma_0 = .949$ works\footnote{In the 3AP setting, we get a larger value for $\gamma_1$, but of course, a value less than $1$.}. Then Lemma 1 applies and we obtain, $$\frac{|S_0|}{|A|} \ge \Probb[A] \ge \beta.$$ It should be noted that we already get a contradiction if $g \le \beta\nu d$ since we clearly must have $|S_0| \le g$. In any event, we argue that this large a weight at $0$ forces $S$ to be close enough to the subgroup $\{0,\frac{n}{g},\frac{2n}{g},\dots,\frac{(g-1)n}{g}\}$ for us to get a direct upper bound on $\Probb[S]$. For ease, let $$D = \{x \in S : x \equiv 0 \pmod{n/g}\}$$  $$E = S\setminus D.$$

Then, $$\Probb[S] = \frac{1}{d^2}\sum_{x,y \in S} 1_S(x+y)$$ $$= \frac{|D|^2}{d^2}\frac{1}{|D|^2}\sum_{x,y \in D} 1_S(x+y) + \frac{2}{d^2}\sum_{x \in D, y \in E} 1_S(x+y) + \frac{1}{d^2}\sum_{x,y \in E} 1_S(x+y).$$ Using that $D$ is contained in a subgroup disjoint from $E$, we have the following (in)equalities $$\sum_{x,y \in D} 1_S(x+y) = \sum_{x,y \in D} 1_D(x+y)$$ $$\sum_{x \in D, y\in E} 1_S(x+y) = \sum_{x \in D, y\in E} 1_E(x+y) = \sum_{y \in E} \sum_{x \in D} 1_{-y+E}(x) \le \sum_{y \in E} |E|$$ $$\sum_{x,y \in E} 1_S(x+y) \le |E|^2. \footnote{In the 3AP setting, we replace $x+y$ with $\frac{x+y}{2}$. If $x,y \in D$, then $\frac{x+y}{2} \in D$. And if $x \in D, y \in E$, then $x+y$ can't be in $2^{-1}D = D$. The three analogous (in)equalities thus hold.} $$ Hence, $$\Probb[S] \le \frac{|D|^2}{d^2}\Probb[D]+\frac{3}{d^2}|E|^2.$$

Using a cheaper ``approximation" argument, similar to the one used previously, that doesn't capitalize on the fact that $D$ is contained in a subgroup disjoint from $E$ will yield an upper bound for $\Probb[S]$ larger than $1$.

Note $\frac{|D|}{d} = \frac{|D|}{|A|}\frac{|A|}{d} \ge \beta\nu$. Let $\eta = \frac{|D|}{d}, k = \frac{n}{g} \in \N, q' = \lfloor \frac{g}{|D|} \rfloor$, and $\al' = \frac{g}{|D|}-q'$. Then by induction and the obvious observation that $\Probb[D]$ is independent of whether the ambient group is $\Z_n$ or $\{0,\frac{n}{g},\dots,(g-1)\frac{n}{g}\}$,  $$\Probb[D] \le \max\left(\frac{(q')^2-\al' q'+(\al')^2}{(q')^2},\frac{(q')^2+2\al' q'+4(\al')^2-6\al'+3}{(q'+1)^2},\gamma_0\right);$$ hence, $$\Probb[S] \le \eta^2 \max\left(\frac{(q')^2-\al' q'+(\al')^2}{(q')^2},\frac{(q')^2+2\al' q'+4(\al')^2-6\al'+3}{(q'+1)^2},\gamma_0\right) +3(1-\eta)^2.$$ Note that the induction is justified, as $q' = \lfloor \frac{g}{|D|} \rfloor \le \frac{g}{|D|} < q$, since $\frac{g}{|D|} \le \frac{n/2}{\beta v d} \le \frac{n/2}{\frac{3}{4}d} = \frac{2}{3}(q+\al)$, where we used that $\beta v \ge \frac{3}{4}$, which holds for $q \ge 2$. We finish by appealing to Lemma 2, which indeed applies when $\beta\nu \ge \frac{3}{4}$.

\vs

The above proof readily extends to an arbitrary finite Abelian group. Fix $r \ge 1$ and positive integers $n_1,\dots, n_r$. Let $n = n_1\dots n_r$ and $S$ be a subset of $\Z_{n_1}\times \dots \times \Z_{n_r}$ of size $|S| = \frac{n}{q+\al}$. Since $\widehat{1_S}(0,\dots,0) = \frac{|S|}{n}$ and Plancherel holds, there is some $(m_1,\dots,m_r) \not = (0,\dots,0)$ with $$\frac{d}{n}\mu := \frac{d}{n}\frac{\gamma-\frac{d}{n}}{1-\frac{d}{n}} \le \widehat{1_S}(m_1,\dots,m_r) = \frac{1}{n}\sum_{(x_1,\dots,x_r) \in S} e^{2\pi i (\frac{m_1x_1}{n_1}+\dots+\frac{m_rx_r}{n_r})}.$$ Analogous to before, letting $A = \{(x_1,\dots,x_r) \in S : 2\pi(\frac{m_1x_1}{n_1}+\dots+\frac{m_rx_r}{n_r}) \in [\frac{-2\pi}{3},\frac{2\pi}{3}] \pmod{2\pi}\}$, we must have $\frac{|A|}{d} \ge \frac{2\mu+1}{3}$. Let $S_j = \{(x_1,\dots,x_r) \in S : e^{2\pi i (\frac{m_1x_1}{n_1}+\dots+\frac{m_rx_r}{n_r})} = e^{2\pi i \frac{j}{n}}\}$. Then, as before, we must have $\frac{|S_0|}{|A|} \ge \beta$. But $S_0$ is a subgroup of $\Z_{n_1} \times \dots \times \Z_{n_r}$, so the same inductive argument finishes the job. \qed

\vsss

\section{Proof of Lemmas}

\begin{lemma} Fix $d \ge 1$ and $\ep \in [0,\frac{1}{10})$. Let $\{a_j\}_{j \in \Z}$ be a collection of non-negative integers such that $\sum_{i \in \Z} a_i = d$ and $a_j = a_{-j}$ for each $j \in \Z$. Then if $$\sum_{i,j} \min(a_ia_j,a_ia_{i+j},a_ja_{i+j}) \ge (1-\ep)d^2,$$ we must have that $$a_0 \ge (1-\ep)d.$$
\end{lemma}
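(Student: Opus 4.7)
The strategy is to bound $\sum_{i,j}\min(a_ia_j, a_ia_{i+j}, a_ja_{i+j})$ from above using shift-overlaps, and then derive the desired concentration of $a_0$ near $d$ from a resulting quadratic inequality.

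The starting point is the elementary bound $\min(XY,XZ,YZ) \le X\min(Y,Z)$, which on summing over $(i,j)$ yields
\[
\sum_{i,j}\min(a_ia_j, a_ia_{i+j}, a_ja_{i+j}) \le \sum_i a_i \Omega_i, \qquad \Omega_i := \sum_j \min(a_j, a_{i+j}) = d - \tfrac{1}{2}\sum_j|a_j - a_{i+j}|.
\]
I would then show $\tfrac{1}{2}\sum_j|a_j - a_{i+j}| \ge a_0$ for $i \ne 0$: the inner sum decomposes across residue classes modulo $i$, and the total variation of a nonnegative, finitely supported sequence is at least twice its maximum. The residue class of $0$ contains $a_0$, so contributes at least $a_0$ to the total variation, giving $\Omega_i \le d - a_0$, and hence $\sum\min \le a_0 d + (d-a_0)^2 = d^2 - a_0(d-a_0)$.

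Setting $t := a_0/d$, the hypothesis $\sum\min \ge (1-\ep)d^2$ becomes $t(1-t) \le \ep$. To sharpen this (since the naive root $t_+ = (1+\sqrt{1-4\ep})/2$ falls short of $1-\ep$), I would exploit the symmetry $a_j = a_{-j}$. Decomposing the sum by inclusion-exclusion over the ``axis'' index sets $\{i=0\}$, $\{j=0\}$, $\{i+j=0\}$ (each contributing the same quantity $\sum_j a_j\min(a_0, a_j)$ by symmetry), and bounding the complementary bulk by $\sum a_ia_j$, yields the sharper estimate
\[
\sum\min \le d^2 - 2(d-a_0)(a_0 - B'), \qquad B' := \max_{j \ne 0} a_j,
\]
valid when $a_0$ is the maximum. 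Combined with the symmetry constraint $2B' \le d-a_0$ (since $B'$ is attained at $\pm j^*$ with $j^* \ne 0$), the hypothesis becomes $(1-t)(3t-1) \le \ep$, whose larger root satisfies $t \ge (2+\sqrt{1-3\ep})/3 \ge 1-\ep$ for $\ep \le 1/3$.

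The main obstacle, I anticipate, is closing the two remaining regimes: (i) $a_0$ is the maximum but $a_0 < d/3$ (so the refined quadratic does not directly apply), and (ii) $a_0$ is not the maximum. In case (i), the $3$-point uniform extremal at $a_0 = d/3$ already gives $\sum\min/d^2 = 7/9 < 9/10$, and a monotonicity/structural argument should extend this to all $a_0 < d/3$; in case (ii), one centers the analysis at $\pm j^*$ where the global maximum $B = a_{j^*}$ is attained, and uses the slack contributions from the pairs $(j^*, -j^*)$ and $(j^*, j^*)$ --- the latter involves $a_{2j^*}$ and iterates, exploiting that $\Z$ has no nontrivial finite subgroups to force $B$ to be small --- to derive the required contradiction. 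The restriction $\ep < 1/10$ should provide the numerical slack needed for all these estimates to align.
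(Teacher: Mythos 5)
Your opening reductions are correct and are a genuinely different route from the paper (which inducts on the support size, peeling off the outermost nonzero weight $a_{n+1}$): the bound $\min(a_ia_j,a_ia_{i+j},a_ja_{i+j})\le a_i\min(a_j,a_{i+j})$, the identity $\Omega_i=d-\tfrac12\sum_j|a_j-a_{i+j}|$, the total-variation lower bound $\tfrac12\sum_j|a_j-a_{i+j}|\ge a_0$ for $i\ne 0$ via residue classes, and the resulting $t(1-t)\le\ep$ all check out, as does the refined axis/inclusion-exclusion estimate $\sum\min\le d^2-2(d-a_0)(a_0-B')$ when $a_0$ is the maximum. Together these correctly dispose of the case $t\ge\frac{1+\sqrt{1-4\ep}}{2}$ (there $a_0>d/2$ is automatically the maximum and $(1-t)(3t-1)\le\ep$ forces $t\ge 1-\ep$).

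The gap is the complementary case, and it is not a technicality. Your unconditional bound $t(1-t)\le\ep$ leaves open only $t\le\frac{1-\sqrt{1-4\ep}}{2}\le 2\ep$, i.e.\ $a_0\le 2\ep d$; moreover the same shift argument applied to the class of the global argmax gives $\max_j a_j\le\frac{\ep}{1-2\ep}d$ there. So the regime you must exclude is precisely the one where \emph{every} weight is small and the mass is spread out --- exactly the regime responsible for the $7/9$ near-extremal configuration $a_0=a_{\pm1}=d/3$ and for the $\ep<\tfrac1{10}$ threshold, and exactly what the paper's induction on support size is built to handle. For this regime you offer only ``a monotonicity/structural argument should extend this'' and an unspecified iteration at $j^*, 2j^*,\dots$; neither is a proof, and your single-class bounds $\Omega_i\le d-\max_j a_j$ only yield $\sum\min\le d^2-(d-a_0)\max_j a_j$, which is far too weak when all weights are $O(\ep d)$. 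To close this you need a quantitative statement that a widely spread symmetric distribution forces $\sum_i a_i\cdot\tfrac12\mathrm{TV}_i$ (or some other loss functional) to exceed $\ep d^2$ --- e.g.\ by summing the maxima over \emph{all} residue classes mod $i$ and exploiting that classes mod large $i$ meet the support in few points --- and that argument is currently absent. As written, the proof covers only the concentrated case.
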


\begin{proof}
Define $\supp(a_j) := \supp((a_j)_{j \in \Z}) := \#\{n \ge 1 : a_n \not = 0\}$. We induct on $\supp(a_j)$, with base case $\supp(a_j)=0$ obvious. Let $(a_j)_{j \in \Z}$ have $\supp(a_j) =: N+1$. Let $n+1$ be the largest index $j$ for which $a_j \not = 0$. First assume that $a_{n+1} \le \frac{1}{10}d$. Define $(b_j)_{j \in \Z}$ via $b_j = a_j$ if $|j| \le n$ and $b_j = 0$ if $|j| \ge n+1$. Then $b_j = b_{-j}$ for $j \in \Z$, $\supp(b_j) \le N$, and $\sum_{j \in \Z} b_j = d-2a_{n+1}$. Note that $$A_{n+1} := \sum_{i,j} \min(a_ia_j,a_ia_{i+j},a_ja_{i+j})$$ $$ \le \sum_{i,j} \min(b_ib_j,b_ib_{i+j},b_jb_{i+j}) + 2\sum_{k=1}^n a_ka_{n+1} + 4\sum_{-n \le k \le -1} a_{n+1}a_k + 2a_{n+1}^2+4a_{n+1}^2$$ $$=: A_n + 6a_{n+1}(\frac{d-a_0-2a_{n+1}}{2}) + 6a_{n+1}^2.$$ Here we counted the number of ways $n+1$ or $-(n+1)$ can occur as $i+j$ for $i,j \not = 0$, then the number of ways $n+1$ or $-(n+1)$ can occur as $i$ or $j$ with no $0$ as the other coordinate, and then accounted for the terms $(i,j) = (n+1,-(n+1)),(-(n+1),n+1),$ $(n+1,0),(-(n+1),0),(0,n+1)$, and $(0,-(n+1))$. If $A_{n+1} \ge (1-\ep)d^2$, then $$(*) \hspace{10mm} A_n \ge (1-\ep)d^2-3a_{n+1}(d-a_0).$$

We first show $3a_0 \ge (1+2\ep)d$. Bounding $a_0 \ge 0$ in (*) gives $$A_n \ge \frac{(1-\ep)d^2-3a_{n+1}d}{(d-2a_{n+1})^2}(d-2a_{n+1})^2.$$ To use the claim applied to $(b_j)_{j \in \Z}$ and total weight $d-2a_{n+1}$, we must check that $$1-\frac{(1-\ep)d^2-3a_{n+1}d}{(d-2a_{n+1})^2} < \frac{1}{10}.$$ It suffices to show $$1-\frac{(1-\ep)d^2-3a_{n+1}d}{(d-2a_{n+1})^2} < \ep.$$ Rearranging gives $$a_{n+1} < \frac{1-4\ep}{4(1-\ep)}d,$$ which is true for $\ep < 1/10$ and $a_{n+1} < \frac{d}{10}$. Hence, by induction, $$3a_0 \ge 3\left[\frac{(1-\ep)d^2-3a_{n+1}d}{(d-2a_{n+1})^2}\right](d-2a_{n+1}) = 3\frac{(1-\ep)d^2-3a_{n+1}d}{(d-2a_{n+1})}.$$ This is larger than $(1+2\ep)d$ iff $$a_{n+1} < \frac{2-5\ep}{7-4\ep}d.$$ This is true for $\ep < 1/10$ and $a_{n+1} < d/10$.

Now, let $\alpha$ be such that $$(1-\ep)d^2-3a_{n+1}(d-2a_{n+1}-a_0)-6a_{n+1}^2 = (1-\al)(d-2a_{n+1})^2.$$ Then, assuming $\al < \frac{1}{10}$, we can use induction to get that $$a_0 \ge (1-\al)(d-2a_{n+1}).$$ So to finish the induction, it suffices to show that $$(1-\al)(d-2a_{n+1}) \ge (1-\ep)d,$$ which is equivalent to $$\frac{(1-\ep)d^2-3a_{n+1}(d-a_0)}{d-2a_{n+1}} \ge (1-\ep)d,$$ which, after simplifying, is equivalent to $$3a_0 > (1+2\ep)d,$$ which we have proven. Therefore, all we need to do is prove $\al < \frac{1}{10}$. It suffices to show $\al < \ep$. But, as we've just noted, $(1-\al)(d-2a_{n+1}) \ge (1-\ep)d$, so $\al \le 1-\frac{(1-\ep)d}{d-2a_{n+1}} \le 1-\frac{(1-\ep)d}{d} = \ep$, as desired.

\vs

We finish by arguing that we in fact must have $a_{n+1} < \frac{d}{10}$ for $\ep < \frac{1}{10}$. First note $$\sum_{i,j} a_ia_j - \sum_{i,j} \min(a_ia_j,a_ia_{i+j},a_ja_{i+j}) \ge 4\sum_{1 \le k \le n} a_ka_{n+1}+2a_{n+1}^2.$$ Therefore, we have that $$d^2 \ge (1-\ep)d^2 + 4a_{n+1}(\frac{d-a_0-2a_{n+1}}{2})+2a_{n+1}^2$$ and hence, $$2a_{n+1}^2-2a_{n+1}(d-a_0)+\ep d^2 \ge 0.$$ As one can verify, the proof given above (for $a_{n+1} < \frac{d}{10}$) works regardless of what $a_{n+1}$ is, if $a_0 > (\frac{1+2\ep}{3})d$. Therefore, we may assume $a_0 \le (\frac{1+2\ep}{3})d$ and get that we must have $$2a_{n+1}^2 - 2a_{n+1}(\frac{2-2\ep}{3})d+\ep d^2 \ge 0.$$ So, $\frac{a_{n+1}}{d} < \frac{\frac{2-2\ep}{3}-\sqrt{(\frac{2-2\ep}{3})^2-2\ep}}{2}$ or $\frac{a_{n+1}}{d} > \frac{\frac{2-2\ep}{3}+\sqrt{(\frac{2-2\ep}{3})^2-2\ep}}{2}$. However, the first expression in $\ep$ is less than $\frac{1}{10}$ for $\ep < \frac{1}{10}$, and the second expression is greater than $\frac{1}{2}$ for $\ep < \frac{1}{10}$. Since we clearly can't have $a_{n+1} > \frac{d}{2}$, we're done.
\end{proof}

\vs

\begin{remark*} It should be noted that the largest we can possibly take $\ep$ in the statement of Lemma 1 is $\ep = \frac{2}{9}$. Consider, for example, $a_0,a_{-1},a_1 = \frac{d}{3}$. Extending Lemma 1 from $\ep < \frac{1}{10}$ to $\ep < \frac{2}{9}$ will just slightly lower the value of $\gamma_0$, and will not allow one to get all the way down to $q \le 3$. 
\end{remark*}

\begin{remark*} In the 3AP setting we may not necessarily have that $a_j = a_{-j}$ for each $j \in \Z$. However, a suitable adjustment of the given proof shows that, for $\ep$ small enough, $\sum_{i,j} \min(a_ia_j,a_ia_{\frac{i+j}{2}},a_ja_{\frac{i+j}{2}}) \ge (1-\ep)d^2$ implies $a_j \ge (1-\ep)d$ for some $j$. We can then just translate $S$ to assume $j=0$.
\end{remark*}

\vs

\begin{lemma} For $q\in \N, \al \in [0,1]$, define $$F(q,\al) = \max\left(\frac{q^2-\al q+\al^2}{q^2},\frac{q^2+2\al q+4\al^2-6\al+3}{(q+1)^2},\gamma_0 \right).$$ For any $q\ge 2, \al \in [0,1], 1 \le k \le q, \eta \in (\frac{3}{4},1]$, if we let $q' = \lfloor \frac{q+\al}{k\eta} \rfloor$ and $\al' = \frac{q+\al}{k\eta} - q'$, then $$\eta^2F(q',\al')+3(1-\eta)^2 < F(q,\al).$$
\end{lemma}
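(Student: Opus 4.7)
The plan is a case split on $\eta \in (3/4, 1]$. First, since $F(q, \al) \ge \gamma_0$ by definition and $F(q', \al') \le 1$, the crude estimate
$$\eta^2 F(q', \al') + 3(1-\eta)^2 \le \eta^2 + 3(1-\eta)^2 = 4\eta^2 - 6\eta + 3$$
gives the inequality immediately whenever $4\eta^2 - 6\eta + 3 < \gamma_0$. Since $4\eta^2 - 6\eta + 3$ is increasing on $[3/4, 1]$ from $3/4$ to $1$, there is a threshold $\eta^* := (3 + \sqrt{4\gamma_0 - 3})/4$ (about $0.973$ when $\gamma_0 = 0.949$) so that this trivial bound handles the whole range $\eta \in (3/4, \eta^*)$.

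For the substantive range $\eta \in [\eta^*, 1]$, I would exploit that the lemma is invoked in the main proof only with $k \ge 2$, since $k = n/g$ with $g = \gcd(m_0, n) < n$ (because $m_0 \ne 0$). With $k \ge 2$,
$$q' + \al' \;=\; \frac{q+\al}{k\eta} \;\le\; \frac{q+1}{2\eta^*} \;<\; \frac{2(q+1)}{3},$$
so $q'$ is strictly less than $q$. Rewriting the target as
$$F(q', \al') \;<\; \frac{F(q, \al) - 3(1-\eta)^2}{\eta^2} \;=\; F(q, \al) + \frac{(1-\eta)\bigl(F(q, \al)(1+\eta) - 3(1-\eta)\bigr)}{\eta^2},$$
one sees the right side exceeds $F(q, \al)$ by a quantity of order $1-\eta$, giving slack that vanishes only at $\eta = 1$. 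I would then bound $F(q', \al')$ via a $3 \times 3$ case analysis on which of the three terms defining $F$ (the two rational functions or $\gamma_0$) achieves the max at $(q, \al)$ and at $(q', \al')$. In each subcase the inequality reduces to an elementary polynomial estimate in $q$, $\al$, $\eta$, using $q' \le q-1$ and $\al' \in [0, 1]$.

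The main obstacle is the degenerate point $\eta = 1$, where the order-$(1-\eta)$ slack disappears and $F(q', \al') < F(q, \al)$ must hold directly. The boundaries $\al \in \{0, 1\}$ yield $F(q, \al) = 1$ and correspond to the trivial content of the main theorems (a set $S$ with $|S| = |G|/q$ can literally be a subgroup), so the lemma is only required for $\al \in (0, 1)$. For interior $\al$, the strict drop $q' \le q - 1$ together with control on $\al'$ should force the inequality, but small values of $q$ (especially $q = 2, 3$), where the asymptotic behavior of $F$ is not well captured by its leading-order terms, will likely require direct numerical verification of the finitely many subcases.
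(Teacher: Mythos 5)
Your trivial bound for $\eta \in (3/4,\eta^*)$ with $\eta^* = (3+\sqrt{4\gamma_0-3})/4 \approx 0.973$ is correct and is a reasonable way to dispatch the easy range, and your observation that in the main proof $k = n/g \ge 2$ (so $q' \le q-1$) is also accurate. But the substantive range $\eta \in [\eta^*,1]$ is where the whole content of the lemma lives, and there your proposal contains only a plan, not a proof. Saying ``$3 \times 3$ case analysis reducing to elementary polynomial estimates'' does not identify any mechanism by which the inequality actually holds, and the one concrete fact you extract, $q' \le q-1$, is not sufficient on its own: $F(q-1,\al')$ can easily exceed $F(q,\al)$ (for instance $F(q-1,0) = 1$ while $F(q,1/2)$ is close to $\gamma_0$). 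What makes the lemma true is the specific constraint $q' + \al' = \tfrac{q+\al}{k\eta}$ linking $\al'$ to $q$, $\al$, $k$, $\eta$, and your sketch never actually exploits it.

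The paper's proof is structured very differently and supplies exactly the piece your proposal is missing. Fixing $q$, $k$, $q'$ and treating $\eta$ as determined by $\al'$, it shows that $f(\al') := \eta^2 F(q',\al') + 3(1-\eta)^2$ has derivative with strictly increasing (in $\al'$) bracketed factor, so $f$ attains its maximum at an endpoint of the allowed $\al'$-interval. At the endpoints either $F(q',\al') = 1$ (giving $\eta^2 + 3(1-\eta)^2 \le F(q,\al)$ via the second rational function), or $\eta = 1$, in which case the claim collapses to the clean number-theoretic inequality $F(\lfloor \tfrac{q+\al}{k}\rfloor, \{\tfrac{q+\al}{k}\}) \le F(q,\al)$, which is then proved by monotonicity of $1 - t + t^2$ in $t \in [0,1/2]$ applied with $t = \al/q$ and $t = (2-2\al)/(q+1)$. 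That reduction to $\eta = 1$ and the subsequent monotonicity argument is the crux of the lemma, and without something equivalent your proposal does not close. (Minor side point: both your proposal and the paper's statement gloss over that at $k=1$, $\eta=1$ the inequality is actually an equality, not strict; this is harmless for the application, since $k = n/g \ge 2$ there, but you should be aware the lemma as literally stated slightly overclaims.)
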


\begin{proof} Fix any $q,k,q' \ge 1$ and $\al \in [0,1]$. Substitute $\eta = \frac{q+\al}{(q'+\al')k}$ and let $$f(\al') := \frac{(q+\al)^2}{k^2}\frac{1}{(q'+\al')^2}F(q',\al')+3(1-\frac{q+\al}{(q'+\al')k})^2.$$ We show that $f(\al')$ attains its maximum at (one of) the extreme values of $\al'$. Define $$f_1(\al') := \frac{(q+\al)^2}{k^2}\frac{1}{(q'+\al')^2}\frac{(q')^2-\al' q'+(\al')^2}{(q')^2}+3(1-\frac{q+\al}{(q'+\al')k})^2$$ $$f_2(\al') := \frac{(q+\al)^2}{k^2}\frac{1}{(q'+\al')^2}\frac{(q')^2+2\al' q'+4(\al')^2-6\al'+3}{(q+1)^2}+3(1-\frac{q+\al}{(q'+\al')k})^2.$$ A straightforward computation shows $$f_1'(\al') = \frac{q+\al}{k^2}\frac{1}{(q'+\al')^3} \cdot $$ $$\bigg[ (2\al'-q')(\al'+q')(q+\al)-2((\al')^2-2q'\al'+(q')^2)(q+\al)+6(k(\al'+q')-(q+\al))\bigg]$$ $$f_2'(\al') = \frac{q+\al}{k^2}\frac{1}{(q'+\al')^3} \cdot$$ $$ \bigg[(\al'+q')(8\al'+2(q'-3))(q+\al)-2(4(\al')^2+2(q'-3)\al'+(q')^2+3)(q+\al)+6(k(\al'+q')-(q+\al))\bigg]$$

In each $f_j'(\al')$, in the brackets, the quadratic term in $\al'$ vanishes. Therefore, in the brackers is a term linear in $\al'$. In $f_1'(\al')$ the coefficient of $\al'$ is $q'(q+\al)+4q'(q+\al)+6k$, which is positive. Similarly, the coefficient of $\al'$ in $f_2'(\al')$ is $8q'(q+\al)+2(q'-3)(q+\al)-4(q'-3)(q+\al)+6k = (6q'+6)(q+\al)+6k$, which is positive. Hence, $f_1(\al'),f_2(\al')$ attain their maximum values only at the extreme values of $\al'$. Since $f(\al') = \max(f_1'(\al'),f_2'(\al'))$\footnote{Clearly $\eta^2 \gamma_0+3(1-\eta)^2 \le \gamma_0$ for $\eta \in (\frac{3}{4},1)$, since $\gamma_0 > \frac{3}{7}$. So, we assume $F(q',\al') \not = \gamma_0$.}, we see that $f(\al')$ attains its maximum at (one of) the extreme values of $\al'$.

\vs

Suppose $\frac{q+\al}{(q'+\al')k} < 1$ for some $\al' \in (0,1)$. Then $\frac{q+\al}{(q'+1)k} < 1$. Note $\al' = 1 \implies F(q',\al') = 1$, and $\eta^2+3(1-\eta)^2$ is increasing for $\eta > \frac{3}{4}$. Since $\eta > \frac{3}{4}$ and since $\eta < 1$, we take $\eta = \frac{q+\al}{q+1}$ (since $q'k \in \N$). We obtain $\frac{q^2+2\al q+4\al^2-6\al+3}{(q+1)^2}$, which, of course, is at most $F(q,\al)$.

\vs

If $\frac{q+\al}{q'k} < 1$, then we take $\al' = 0$ and argue as above. Otherwise, the extreme value of $\al'$ is the one making $\eta = 1$, namely $\al'_{crit} = \frac{q+\al}{k}-q'$. At $\eta = 1$, our desired inequality becomes $F(q',\al'_{crit}) \le F(q,\al)$. Since $\al'_{crit} \in [0,1]$ and $q' \in \N$, we have $q' = \lfloor \frac{q+\al}{k} \rfloor, \al'_{crit} = \{\frac{q+\al}{k}\}$, the fractional part. Therefore, it just suffices to show, generally, that $$q,k \ge 1, \al \in [0,1] \implies F(\lfloor \frac{q+\al}{k} \rfloor, \{\frac{q+\al}{k}\}) \le F(q,\al).$$

\vs

Clearly, the inequality holds if $F(\lfloor \frac{q+\al}{k} \rfloor, \{\frac{q+\al}{k}\}) = \gamma_0$. If $q=2$, then either $k=1$ and the inequality is an equality, or $k=2$ and $F(\lfloor \frac{q+\al}{k} \rfloor, \{\frac{q+\al}{k}\}) = F(1, \frac{\al}{2}) = 1-\frac{\al}{2}+\frac{\al^2}{4}$, while $F(q,\al) \ge \frac{4-2\al+\al^2}{4} = 1-\frac{\al}{2}+\frac{\al^2}{4}$. So, assume $q \ge 3$.

\vs

Note that $\frac{q^2-\al q+\al^2}{q^2} = 1-\frac{\al}{q}+(\frac{\al}{q})^2$ is decreasing in $\frac{\al}{q}$ if $\frac{\al}{q} < \frac{1}{2}$. And for $q \ge 3$, $\frac{\al}{q}, \frac{\{\frac{q+\al}{k}\}}{\lfloor \frac{q+\al}{k}\rfloor} < \frac{1}{2}$. Therefore, to show that $$\frac{\lfloor \frac{q+\al}{k} \rfloor^2-\{\frac{q+\al}{k}\}\lfloor \frac{q+\al}{k}\rfloor+\{\frac{q+\al}{k}\}^2}{\lfloor \frac{q+\al}{k} \rfloor^2} \le \frac{q^2-\al q +\al^2}{q^2},$$ it suffices to show $$\frac{\{\frac{q+\al}{k}\}}{\lfloor \frac{q+\al}{k} \rfloor} \ge \frac{\al}{q}.$$ But $q\{\frac{q+\al}{k}\} = q(\frac{q+\al}{k}-\lfloor \frac{q+\al}{k}\rfloor)$, so the inequality reduces to $\frac{q}{k} \ge \lfloor \frac{q+\al}{k} \rfloor$, which is true since $\lfloor \frac{q+\al}{k}\rfloor = \lfloor \frac{q}{k} \rfloor$, since if $\frac{q}{k} < m \in \N$, then $\frac{q}{k} \le m-\frac{1}{k}$.

Next, observe that $$\frac{q^2+2\al q+4\al^2-6\al+3}{(q+1)^2} = \frac{(q+1)^2-(2-2\al)(q+1)+(2-2\al)^2}{(q+1)^2},$$ so since $\frac{2-2\al}{q+1} \le \frac{1}{2}$ for $q \ge 3$, as before it suffices to show that $$\frac{2-2\{\frac{q+\al}{k}\}}{\lfloor \frac{q+\al}{k}\rfloor +1} \ge \frac{2-2\al}{q+1}.$$ However, substituting $\{\frac{q+\al}{k}\} = \frac{q+\al}{k}-\lfloor \frac{q+\al}{k} \rfloor$, collecting terms with $q+\al$, and simplifying yields the equivalent $$\lfloor \frac{q+\al}{k} \rfloor + 1 \ge \frac{q+1}{k}.$$ And this is clearly true.
\end{proof}

\vsss

\section{Verifying the Gan-Loh-Sudakov Conjecture for Cayley Graphs}

We verify that our bound implies the bound in the Gan-Loh-Sudakov conjecture when $q \ge 7$. Take a finite Abelian group $G$ and a symmetric subset $S \sub G$ not containing $0$. Let $n=|G|$, $S_0 = S\cup\{0\}$, $d= |S|$, $q = \lfloor \frac{n}{|S_0|} \rfloor$, and $\al = \frac{n}{|S_0|}-q$. The benefit of working with $S_0$ is that the graph-theoretic bound takes the simpler form $$|T_{conj}| \le q{d+1 \choose 3}+{r \choose 3} = q{|S_0| \choose 3}+{\al|S_0| \choose 3}.$$ Note $$\Probb[S_0] = \frac{1}{|S_0|^2}\sum_{x,y \in S_0} 1_{S_0}(x+y) = \frac{1}{|S_0|^2}\left[\sum_{x,y \in S} 1_{S_0}(x+y) + 2\sum_{y \in S} 1_{S_0}(y)+1_{S_0}(0+0)\right].$$ Taking into account that for each $x \in S$ there is exactly one $y \in S$ for which $x+y = 0$, we see $$\Probb[S] = \frac{|S_0|^2}{|S|^2}\left[\Probb[S_0]-\frac{3|S|+1}{|S_0|^2}\right].$$ The number of triangles in our Cayley graph is thus $$\frac{1}{6}n|S|^2\Probb[S] = \frac{1}{6}(q+\al)|S_0|^3\left[\Probb[S_0]-\frac{3|S|+1}{|S_0|^2}\right].$$ For ease, let $M = \max\left(\frac{q^2-\al q+\al^2}{q^2},\frac{q^2+2\al q+4\al^2-6\al+3}{(q+1)^2},\gamma_0\right)$ so that, by Theorem 2 applied to $S_0$ (which is symmetric), we may bound the number of triangles by $$\frac{1}{6}(q+\al)M |S_0|^3 - \frac{1}{6}(q+\al)|S_0|(3|S_0|-2).$$ As one may check, this is less than $q{|S_0| \choose 3}+{\al |S_0| \choose 3}$ iff $$[(q+\al^3)-(q+\al)M]|S_0|^3+[3\al-3\al^2]|S_0|^2 \ge 0.$$ Therefore, it suffices to have $M \le \frac{q+\al^3}{q+\al}$. We have $\gamma_0 \le \frac{q+\al^3}{q+\al}$ for all $q \ge 7$ and any $\al \in [0,1]$. And, for any $q \ge 1, \al \in [0,1]$, $$\frac{q+\al^3}{q+\al} - \frac{q^2-\al q+\al^2}{q^2} = \frac{\al^3(q^2-1)}{q^2(q+\al)},$$ $$\frac{q+\al^3}{q+\al}-\frac{q^2+2\al q+4\al^2-6\al+3}{(q+1)^2} = \frac{(1-\al)^2(q-1)((2+\al)q+3\al)}{(q+1)^2(q+\al)}$$ are non-negative.

\vs

\section{Base Case $q=1$}

We finish by proving Theorems 1 and 2 when $|S| = \frac{n}{1+\al}$ for some $\al \in [0,1]$. Note $$\sum_{y \in S}\sum_{x \in G} 1_S(x+y) = \sum_{y \in S} |S| = |S|^2.$$ So, $$\sum_{x,y \in S} 1_S(x+y) = |S|^2-\sum_{x \not \in S} \sum_{y \in S} 1_S(x+y) = |S|^2-\sum_{x \not \in S} |(-x+S)\cap S|.$$ By pigeonhole, $|(-x+S)\cap S| \ge 2|S|-n$, and thus, $$|S|^2\Probb[S] \le |S|^2-\sum_{x \not \in S} (2|S|-n) = |S|^2(1-\al+\al^2).$$ As $1-\al+\al^2 = \frac{q^2-\al q+\al^2}{q^2}$ for $q=1$, Theorem 2 is established. Replacing $S$ with $2S$ in the appropriate places establishes Theorem 1 as well.

\vs

\section{Acknowledgments}

I would like to thank Po-Shen Loh for telling me the graph theoretic conjecture. I would also like to thank Adam Sheffer and Cosmin Pohoata for helpful comments.

\vs

\end{document}